\title{{ An analytic approach to special numbers and polynomials}}
\author{Grzegorz Rz\c{a}dkowski
}
\date{Faculty of Mathematics and Natural Sciences,\\ Cardinal Stefan Wyszy\'nski 
University in Warsaw,\\ Dewajtis 5, 01 - 815 Warsaw, Poland\\ g.rzadkowski@uksw.edu.pl  \\
grzerzad@gmail.com} 
\begin{document}
\maketitle
\newtheorem{lemma}{Lemma}
\newtheorem{theorem}{Theorem}
\newtheorem{remark}{Remark}
\newtheorem{statement}{Statement}
\def \bangle{ \atopwithdelims \langle \rangle}
\begin{abstract}
The purpose of this article is to present, in a simple way, an analytic approach to special numbers and polynomials.
The approach is based on the derivative polynomials. The paper is, to some extent, a review article, although it contains some new elements. In particular, it seems that some integral representations for Bernoulli numbers and Bernoulli polynomials can be seen as new.
\end{abstract}
\noindent 2010 {\it Mathematics Subject Classification}:
11B73; 11B68.\\
\noindent \emph{Keywords: } Eulerian numbers, Eulerian polynomials, MacMahon numbers, MacMahon polynomials, derivative polynomials, Bernoulli numbers, Bernoulli polynomials, integral representations.
\section{Introduction}
Let $u=u(z)$ be a holomorphic function defined in a domain $D_{u} \subset \mathbb{C}$ which fulfills the Riccati differential equation with constant coefficients
\begin{equation}\label{R}
	u'=r(u-a)(u-b)
\end{equation}
where $r,a,b$ are real or complex numbers $r\neq 0, \; a \neq b$. Let $v=v(z)$ be a holomorphic function defined in a domain $D_{v} \subset \mathbb{C}$ which is related with $u(z)$ and fulfills the following differential equation
\begin{equation}\label{R2}
	v'=rv\left(u-\frac{a+b}{2}\right),
\end{equation}
where $a,b,r,u(z)$ are as in (\ref{R}).\\
Examples of such pairs of functions and equations are:
\begin{enumerate}
	\item $u(z)=\tan z, \quad u'(z)=u^{2}+1,\quad v(z)= \sec z, \quad v'=vu,$
	\item $u(z)=\tanh z, \quad u'(z)=-u^{2}+1,\quad v(z)= 1/\cosh z, \quad v'=-vu,$
	\item $u(z)=\cot z, \quad u'(z)=-u^{2}-1,\quad v(z)= \csc z, \quad v'=-vu,$
	\item $u(z)=\coth z, \quad x'(z)=u^{2}-1,\quad v(z)= 1/\sinh z, \quad v'=vu,$
	\item $u(z)=1/(1+e^{ z}), \quad u'(z)=u^{2}-u, \quad v(z)= e^{z/2}/(1+e^{ z}), \quad \\ v'=v(u-1/2),$\label{l}
	\item $u(z)=1/(1+e^{ -z}), \quad u'(z)=-u^{2}+u, \quad v(z)= e^{-z/2}/(1+e^{-z}), \\ v'=-v(u-1/2),$
	\item more generally the logistic function: $u(z)=q/(1+pe^{ -sz}), \quad \\ u'(z)=\frac{s}{q}(q-u)u, \quad v(z)=qe^{-sz/2}/(1+pe^{ -sz}), \quad v'(z)=\frac{s}{q}v(q/2-u)$ (with $p>1,\; q>0,\; s>0$). \label{log}
\end{enumerate}
We will consider also the following generalization of equation (\ref{R2}) 
\begin{equation}\label{R2bis}
	v'=rv\left(u-\frac{a+b}{2}+d\right),
\end{equation}
where d is a real or complex number.\\
Such system of differential equations has been investigated by Hoffman \cite{Ho} (instead of equations (\ref{R2}-\ref{R2bis}) he regarded $v'=vu$ but in each particular case, which he regarded, $a+b=0$) and by Franssens \cite{F2} (who investigated the equation $v'=-vu$)\\ 
Let $\{a_1,a_2,\ldots ,a_n\}$ be a permutation of the set $\{1,2,\ldots ,n\}$. Then $\{a_{j},a_{j+1}\}$ is an ascent of the permutation if $a_j< a_{j+1}$. 
The Eulerian number $\displaystyle  {n \bangle k} $ is defined as the number of permutations of the set $\{1,2,\ldots ,n\}$ having $k$ permutation ascents (see \cite{GKP}, p.267). For example for $n=3$ the permutation $\{1,2,3\}$ has two ascents, namely $\{1,2\}$ and $\{2,3\}$, and  $\{3,2,1\}$ has no ascents. Each of the other four permutations of the set has exactly one ascent. Thus $\displaystyle {3 \bangle 0} =1 $,  $\displaystyle {3 \bangle 1} =4 $, and $\displaystyle  {3 \bangle 2} =1 $. It is well known that Eulerian numbers satisfy the following relations:
{\setlength\arraycolsep{2pt}
\begin{eqnarray}
{n \bangle k} &=&  {n \bangle n-k-1} , \nonumber \\
 {n+1 \bangle k} &=& (k+1) {n \bangle k}
+(n-k+1) {n \bangle k-1} , \label{eq1}\\
 {n \bangle k} &=& \sum\limits_{j=0}^{k}(-1)^{j}{n+1 \choose j}(k-j+1)^{n}. \label{eq2}
\end{eqnarray}}
The Eulerian polynomial $E_{n}(x),\; n=0,1,2,\ldots $ is defined (see Comtet \cite{C})  by the formula
\begin{equation}\label{Ep}
	E_{n}(x)= \sum\limits_{k=0}^{n-1}{n \bangle k}x^{k+1}\;\;\textrm{for}\; n\ge 1,\quad E_0(x)=1.
\end{equation}
There is a slightly different definition of the Eulerian polynomial $A_{n}(x)$ (see for example Foata \cite{Fo}) i.e.,
\begin{equation}\label{Ep2}
	A_{n}(x)= \sum\limits_{k=0}^{n-1}{n \bangle k}x^{k}, \quad A_0(x)=1.
\end{equation}
Thus $E_{n}(x)=xA_{n}(x)$ for $n\ge 1$, $A_{1}(x)= A_{0}(x)=E_{0}(x)\equiv 1$.\\
The MacMahon numbers $\{M_{n,k}\}$ are defined by the recurrence formula (see \cite{M}, \cite{F})
\begin{equation}\label{MM}
	M_{n,k} = (2k-1)M_{n-1,k} + (2n-2k+1)M_{n-1,k-1},
\end{equation}
where $1\le k\le n,\;\; M(n,1)=1,\; n=1,2,\ldots $.\\
The MacMahon polynomial $M_{n}(x),\; n=0,1,2,\ldots $ is defined as follows
\begin{equation}\label{MMp}
	M_{n}(x)=\sum\limits_{k=1}^{n+1}M_{n+1,k}\:x^{k-1}.
\end{equation}
\section{Derivative polynomials}
The following theorem has been discussed during the Conference ICNAAM 2006 (September 2006) in Greece and it appears in the paper \cite{Rz}. For convenience of the reader we give it with an inductive proof. Independently the theorem has been considered and proved, with a proof based on generating functions, by Franssens \cite{F2} (see also \cite{Rz1}).
\begin{theorem}
If a function $u(z)$ satisfies equation (\ref{R}),
then the $n$th derivative of $u(z)$ can be expressed by the following formula:
\begin{equation}\label{f}
u^{(n)}(z) = r^{n}\sum\limits_{k=0}^{n-1} {n \bangle k}
(u-a)^{k+1}(u-b)^{n-k}
\end{equation}
where $n=2,3,\ldots $.
\end{theorem}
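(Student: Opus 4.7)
The plan is to proceed by induction on $n$. For the base case $n=2$, a direct application of the product rule to $u'=r(u-a)(u-b)$ yields $u''=ru'\left[(u-a)+(u-b)\right]=r^{2}(u-a)(u-b)^{2}+r^{2}(u-a)^{2}(u-b)$, which matches (\ref{f}) since ${2 \bangle 0}={2 \bangle 1}=1$.

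For the inductive step, assume (\ref{f}) holds at level $n$. Differentiating term by term and then substituting $u'=r(u-a)(u-b)$ gives
\[
u^{(n+1)}=r^{n+1}\sum_{k=0}^{n-1}{n \bangle k}\left[(k+1)(u-a)^{k+1}(u-b)^{n-k+1}+(n-k)(u-a)^{k+2}(u-b)^{n-k}\right].
\]
Reindex so that a generic term has the form $(u-a)^{j+1}(u-b)^{n+1-j}$: the first piece contributes to index $j=k$ and the second piece to $j=k+1$. Collecting yields coefficient $(j+1){n \bangle j}+(n-j+1){n \bangle j-1}$ in front of $(u-a)^{j+1}(u-b)^{n+1-j}$, which by the Eulerian recurrence (\ref{eq1}) equals ${n+1 \bangle j}$. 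This is precisely (\ref{f}) at level $n+1$.

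The only real obstacle is the bookkeeping at the boundaries $j=0$ and $j=n$, where only one of the two pieces contributes. At $j=0$ the first piece alone gives $(0+1){n \bangle 0}=1={n+1 \bangle 0}$; at $j=n$ the second piece alone gives $(n-n+1){n \bangle n-1}=1={n+1 \bangle n}$. Using the convention ${n \bangle -1}={n \bangle n}=0$, the recurrence (\ref{eq1}) absorbs both boundary cases uniformly, and the summation index indeed runs over $0\le j\le n$ as required. No analytic subtleties enter: $u(z)$ is holomorphic on $D_u$, so everything is an identity in the ring of holomorphic functions there, and the induction carries through for all $n\ge 2$.
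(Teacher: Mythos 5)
Your proof is correct and follows essentially the same route as the paper: induction on $n$ with base case $n=2$, term-by-term differentiation using $u'=r(u-a)(u-b)$, reindexing, and the Eulerian recurrence (\ref{eq1}) to collect coefficients. The only cosmetic difference is that the paper splits off the boundary terms $j=0$ and $j=n$ explicitly rather than absorbing them via the convention ${n \bangle -1}={n \bangle n}=0$, which you justify correctly.
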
 
\begin{proof} 
By (\ref{R}) we get
	\[	u''(t) = r[(u-a)+(u-b)]u'(z)=r^{2}[(u-a)(u-b)^{2}+(u-a)^{2}(u-b)],
\]
which establishes (\ref{f}) for $n=2$. Let us assume that for an integer $n\ge 2$ formula (\ref{f}) holds. Using recurrence formula (\ref{eq1}) in the last step of the following calculation we get 
{\setlength\arraycolsep{2pt}
\begin{eqnarray}
&& u^{(n+1)}(z)= r^{n}\frac{d}{dz}\sum\limits_{k=0}^{n-1} {n \bangle k}
(u-a)^{k+1}(u-b)^{n-k}\nonumber \\
&&= r^{n+1}\sum\limits_{k=0}^{n-1} {n \bangle k}
\left[(k\!+\!1)(u\!-\!a)^{k+1}(u\!-\!b)^{n-k+1}\!+\!(n\!-\!k)(u\!-\!a)^{k+2}(u\!-\!b)^{n-k}\right]\nonumber \\
&& =r^{n+1}\left[ {n \bangle 0} (u\!-\!a)(u\!-\!b)^{n+1}
+\sum\limits_{k=1}^{n-1}\left((k\!+\!1) {n \bangle k}
+(n\!-\!k\!+\!1) {n \bangle k\!-\!1} \right)\right.\nonumber \\
&&\hspace{11mm}\left. \times (u\!-\!a)^{k+1}(u\!-\!b)^{n-k+1}
+ {n \bangle n\!-\!1} (u\!-\!a)^{n+1}(u\!-\!b)\right]
\nonumber \\
&& =r^{n+1}\sum\limits_{k=0}^{n} {n\!+\!1 \bangle k}
(u-a)^{k+1}(u-b)^{n-k+1},\nonumber
\end{eqnarray}}
which ends the proof.
\end{proof}
The following two theorems (2 and 3) are connected with solutions of equations (\ref{R2}) and (\ref{R2bis}) respectively and  are due to Franssens \cite{F2}. Theorem 2 is a particular case of Theorem 3. We write them down here in a slightly different form than in \cite{F2}. Franssens proved the theorems by using generating functions but they can be proved also by induction, similarly as Theorem 1. 
\begin{theorem}
If functions $u=u(z)$ and $v=v(z)$ are any solutions of the equations (\ref{R}), (\ref{R2}) respectively, then 
the $n$th derivative of $v(z)$ is equal:
\begin{equation}\label{g}
	v^{(n)}(z)=v\frac{r^{n}}{2^{n}}\sum\limits_{k=1}^{n+1}M_{n+1,k}(u-a)^{n+1-k}(u-b)^{k-1}.
\end{equation}
\end{theorem}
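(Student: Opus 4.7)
The plan is to prove formula (\ref{g}) by induction on $n$, in the same spirit as the proof of Theorem 1. For the base case $n=1$, the MacMahon recurrence (\ref{MM}) gives $M_{2,1}=M_{2,2}=1$ (under the natural convention $M_{n,k}=0$ for $k$ outside $\{1,\ldots,n\}$), and the right-hand side collapses to $v\frac{r}{2}\bigl[(u-a)+(u-b)\bigr]=rv\bigl(u-\frac{a+b}{2}\bigr)$, which is exactly (\ref{R2}).

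For the inductive step, I would assume (\ref{g}) at level $n$ and differentiate both sides. The two key rewritings are $v'=\frac{r}{2}v\bigl[(u-a)+(u-b)\bigr]$ (an algebraic rearrangement of (\ref{R2})) and $u'=r(u-a)(u-b)$ (equation (\ref{R})). After the product rule, each summand $M_{n+1,k}(u-a)^{n+1-k}(u-b)^{k-1}$ contributes two pieces from multiplication by $v'$ and two pieces from $u'$ acting via the chain rule on the factors $(u-a)^{n+1-k}$ and $(u-b)^{k-1}$. Factoring out $v\,r^{n+1}/2^{n+1}$ and combining like monomials using the elementary identities $1+2(k-1)=2k-1$ and $1+2(n+1-k)=2n+3-2k$, the generic summand becomes
\[
M_{n+1,k}\bigl[(2k-1)(u-a)^{n+2-k}(u-b)^{k-1}+(2n+3-2k)(u-a)^{n+1-k}(u-b)^{k}\bigr].
\]

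Reindexing the second family by $k\mapsto k-1$ so that both monomials take the common form $(u-a)^{n+2-j}(u-b)^{j-1}$, the total coefficient of this monomial equals $(2j-1)M_{n+1,j}+(2(n+2)-2j+1)M_{n+1,j-1}$, which is precisely $M_{n+2,j}$ by (\ref{MM}). Boundary indices $j=1$ and $j=n+2$ are absorbed by the zero convention on $M_{n+1,0}$ and $M_{n+1,n+2}$, and the identity at level $n+1$ follows, closing the induction.

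The main obstacle I anticipate is purely organizational: differentiation produces four sums, and one must reindex two of them so that all monomials align before the MacMahon recurrence can be read off cleanly. Once the arithmetic simplifications are carried out and the shift is applied, the identification with (\ref{MM}) is mechanical, and the argument parallels that of Theorem 1 almost verbatim.
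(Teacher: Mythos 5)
Your proof is correct: the base case, the rewriting $v'=\frac{r}{2}v[(u-a)+(u-b)]$, the coefficient bookkeeping $(2k-1)$ and $(2(n+2)-2k+1)$ after the shift, and the boundary conventions all check out against the recurrence (\ref{MM}). The paper does not write out a proof of this theorem --- it cites Franssens' generating-function argument and remarks only that an induction ``similarly as Theorem 1'' is possible --- and your argument is precisely that induction, carried out in full and matching the paper's intended route.
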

Denote by $Q_{n}(u; a, b),\;n=0,1,2,\ldots $ the polynomial (of order $n$) standing on the right hand side of equation (\ref{g}) i.e.,
	\[Q_{n}(u; a, b)=\sum\limits_{k=1}^{n+1}M_{n+1,k}(u-a)^{n+1-k}(u-b)^{k-1}.
\]
\begin{theorem}
If functions $u=u(z)$ and $v=v(z)$ are any solutions of the equations (\ref{R}), (\ref{R2bis}) respectively, then 
the $n$th derivative of $v(z)$ is equal:
\begin{equation}\label{gd}
	v^{(n)}(z)=v\frac{r^{n}}{2^{n}}\sum\limits_{k=0}^{n}{n \choose k} (2d)^{k}Q_{n-k}(u; a, b).
\end{equation}
\end{theorem}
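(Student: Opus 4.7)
The plan is to reduce Theorem 3 to Theorem 2 by factoring out the extra exponential growth introduced by the $d$-term. Concretely, I would let $w(z)$ be a solution of the unperturbed equation $w' = rw(u - (a+b)/2)$ that appears in Theorem 2, and verify by a one-line computation that $v(z) := w(z)\, e^{rdz}$ satisfies
\begin{equation*}
v' = w' e^{rdz} + rd\, w\, e^{rdz} = rv\!\left(u - \tfrac{a+b}{2} + d\right),
\end{equation*}
so that every solution of (\ref{R2bis}) is of the form $w(z) e^{rdz}$ with $w$ a solution of (\ref{R2}). This step is essentially trivial; the substance of the proof is the differentiation that follows.

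Next I would apply Leibniz's rule for the $n$th derivative of the product $w(z) e^{rdz}$:
\begin{equation*}
v^{(n)}(z) = \sum_{k=0}^{n} \binom{n}{k} w^{(n-k)}(z)\, (rd)^{k} e^{rdz}.
\end{equation*}
Theorem 2 gives a closed form for $w^{(n-k)}(z)$, namely $w^{(n-k)}(z) = w\, \frac{r^{n-k}}{2^{n-k}} Q_{n-k}(u;a,b)$. Substituting this, pulling out the common factor $w\, e^{rdz} = v$, and collecting powers of $r$ and $2$ yields
\begin{equation*}
v^{(n)}(z) = v \sum_{k=0}^{n} \binom{n}{k} \frac{r^{n-k}}{2^{n-k}} (rd)^{k} Q_{n-k}(u;a,b) = v\frac{r^{n}}{2^{n}} \sum_{k=0}^{n} \binom{n}{k} (2d)^{k} Q_{n-k}(u;a,b),
\end{equation*}
which is exactly (\ref{gd}).

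The only real obstacle is the bookkeeping of the factors of $r$ and $2$ in the final line, which must produce the combination $(2d)^k r^n / 2^n$ from $(rd)^k r^{n-k}/2^{n-k}$; this is a direct check. An alternative route would be induction on $n$ mimicking the proof of Theorem 1, using the MacMahon recurrence (\ref{MM}) and the binomial identity $\binom{n}{k} + \binom{n}{k-1} = \binom{n+1}{k}$, but the $e^{rdz}$-factor approach is considerably cleaner and makes transparent why Theorem 2 is precisely the $d=0$ specialization.
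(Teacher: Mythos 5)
Your proof is correct. Note first that the paper does not actually supply a proof of Theorem 3: it attributes the result to Franssens, who proved it via generating functions, and remarks only that it could also be obtained by induction on $n$ in the manner of Theorem 1, using the MacMahon recurrence (\ref{MM}). Your route is genuinely different from either of these. The one-line observation that every solution of (\ref{R2bis}) is $v=we^{rdz}$ with $w$ a solution of (\ref{R2}) (invertibly, since $w=ve^{-rdz}$) reduces the whole statement to the Leibniz rule applied to the product $w\cdot e^{rdz}$ together with Theorem 2, and the bookkeeping ${n \choose k}\frac{r^{n-k}}{2^{n-k}}(rd)^{k}=\frac{r^{n}}{2^{n}}{n \choose k}(2d)^{k}$ does check out; the $k=n$ term is covered because $Q_{0}(u;a,b)=M_{1,1}=1$. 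What this buys is transparency: it explains at a glance why the sum in (\ref{gd}) is a binomial convolution of the $Q_{n-k}$ against powers of $2d$ (it is literally a Leibniz expansion), and it makes the $d=0$ specialization back to Theorem 2 tautological. The only point of logical hygiene: the paper phrases the relationship the other way round (``Theorem 2 is a particular case of Theorem 3''), so you should state explicitly that you take Theorem 2 as already established independently (by induction as in Theorem 1, or by Franssens's generating-function argument); with that said, there is no circularity.
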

The polynomials $Q_{n}(u; a, b)$ are related to MacMahon polynomials (\ref{MMp}) by the formula
	\[M_{n}(x)=\frac{Q_{n}(u; a, b)}{(u-b)^{n}}\left|_{\frac{u-a}{u-b}=x}\right. .
\]
Similarly we denote by $P_{n+1}(u;a,b),\; n=1,2,\ldots$ the polynomial (of order $n+1$) standing of the right hand side of equation (\ref{f}). Thus
	\[P_{n+1}(u;a,b)=\sum\limits_{k=0}^{n-1} {n \bangle k}
(u-a)^{k+1}(u-b)^{n-k},\; n=1,2,\ldots \quad P_{1}(u)=u-a.
\]
Obviously the polynomial $P_{n+1}(u;a,b)$  can be rearranged into the Eulerian polynomial $E_{n}(x),\;\;n=1,2,\ldots$ using the following formula:
\begin{equation}
	E_{n}(x)=\frac{P_{n+1}(u;a,b)}{(u-b)^{n+1}}\left|_{\frac{u-a}{u-b}=x}\right.
\end{equation}
Polynomials $\{P_{n}(u;a,b)\}$ and $\{Q_{n}(u;a,b)\}$t are called the derivative polynomials. They have been introduced by Hoffman \cite{Ho} who used them to calculate some integrals with parameters and for summing some series, without giving any explicit formula for the coefficients. The polynomials were recently intensively studied (see for example \cite{Bo}, \cite{Bo2}, \cite{DC}, \cite{F}, \cite{Rz1}, \cite{Rz2}).
\section{Generating functions for the Eulerian polynomials}
It is easy to find the closed form of the following exponential generating function (see \cite{Ho}, \cite{F2}): 
\begin{equation}\label{gf}
	F(u,t)= u + rP_{2}(u;a,b)t+r^{2}P_{3}(u;a,b)\frac{t^2}{2!}+\cdots.
\end{equation}
For convenience of the reader we give the calculation for (\ref{gf}). Let $u=u(z)$  be a solution of the equation (\ref{R}). By the Taylor formula for the function $u=u(z)$ we have
{\setlength\arraycolsep{2pt}
\begin{eqnarray*}
F(u(z),t)&=& u(z) + rP_{2}(u(z);a,b)t+r^{2}P_{3}(u(z);a,b)\frac{t^2}{2!}+\cdots \\
&=& u(z)+u'(z)t+u''(z)\frac{t^2}{2!}+\cdots =u(z+t),
\end{eqnarray*}}
and 
\begin{equation}\label{gr1}
	F(u,t)= u(z(u)+t).
\end{equation}
For example if $a=0,\; b=1,\; u=1/(1+\exp(z)),\; \exp(z)=(1-u)/u$ (see point \ref{l}. on the list of functions on page 2) we get
\begin{equation}\label{gr2}
	F(u,t)= \frac{1}{1+e^{ z(u)+t}}=\frac{1}{1+\frac{1-u}{u}\:e^{t}}=\frac{u}{u+(1-u)e^{t}}.
\end{equation}
The generating function (\ref{gr2}) can be used for calculation of the exponential generating function for the Eulerian polynomials (\ref{Ep})
\begin{equation}\label{gr3}
	E_{0}(x) +E_{1}(x)y + E_{2}(x)\frac{y^2}{2!}+E_{3}(x)\frac{y^3}{3!}+\cdots .
\end{equation}
In order to do it let us observe, that we obtain the generating function (\ref{gr3}) by substituting in the expression
	\[\frac{F(u,t)-u}{u-1}+1,
\]
where $F(u,t)$ is given by the formula (\ref{gr2}), $u/(u-1)=x$ and $(u-1)t=y$ (that is $1/(u-1)=x-1$, $x/(x-1)=u$, $t=(x-1)y$). We calculate
{\setlength\arraycolsep{2pt}
\begin{eqnarray*}
&&\frac{F(u,t)-u}{u-1}+1=\frac{F(u,t)-1}{u-1}=\frac{\frac{u}{u+(1-u)e^{t}}-1}{u-1}=\frac{e^{t}}{u+(1-u)e^{t}}\\
&&=\frac{e^{(x-1)y}}{\frac{x}{x-1}-\frac{1}{x-1}e^{(x-1)y}}=\frac{1-x}{1-xe^{(1-x)y}}.
\end{eqnarray*}}
Therefore the generating function for the Eulerian polynomials is
\begin{equation}\label{gr4}
	E_{0}(x) +E_{1}(x)y + E_{2}(x)\frac{y^2}{2!}+E_{3}(x)\frac{y^3}{3!}+\cdots =\frac{1-x}{1-xe^{(1-x)y}}.
\end{equation}
Formula (\ref{gr4}) gives immediately the generating function for the Eulerian polynomials $\{A_{n}(x)\}$  defined by (\ref{Ep2}). We have
{\setlength\arraycolsep{2pt}
\begin{eqnarray}
&&A_{0}(x) +A_{1}(x)y + A_{2}(x)\frac{y^2}{2!}+A_{3}(x)\frac{y^3}{3!}+\cdots \nonumber\\
&& =\left(\frac{1-x}{1-xe^{(1-x)y}}-1 \right)\frac{1}{x} +1=\frac{x-1}{x-e^{(x-1)y}} \label{gr5}
\end{eqnarray}}
Foata \cite{Fo} notices that formula (\ref{gr5}) was known to Euler.
\section{Some others classical formulae concerning Eulerian polynomials}
The approach to Eulerian numbers and polynomials presented here is useful in obtaining other known results. For example the following classical formula concerning Eulerian polynomials
\begin{equation}\label{c1}
	E_{n}(x)=\sum\limits_{k=1}^{n-1}{n \choose k} E_{k}(x)(x-1)^{n-1-k}+E_{1}(x)(x-1)^{n-1}
\end{equation}
$n=1,2,\ldots $ or equivalently expressed in terms of the polynomials$\{A_{n}(x)\}$: 
\begin{equation}\label{c2}
	A_{n}(x)=\sum\limits_{k=0}^{n-1}{n \choose k} A_{k}(x)(x-1)^{n-1-k}
\end{equation} 
is an easy consequence of the following lemma.
\begin{lemma}
If a function $u=f(z)$ fulfills the equation $u'=u(u-1)$ then for any $n=1,2,\ldots$ 
\begin{equation}\label{fn}
	f^{(n)}(z)=(f(z)-1)\sum\limits_{k=0}^{n-1}{n \choose k}f^{(k)}(z).
\end{equation}
\end{lemma}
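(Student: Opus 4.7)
I would prove the lemma by induction on $n$, with the crucial trick being to reuse the inductive hypothesis in the middle of the inductive step to absorb the factor of $f$ arising from $f'$.

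For the base case $n=1$, the identity reduces to $f' = (f-1)f$, which is exactly the hypothesis $u' = u(u-1)$. Assume the formula holds for some $n \geq 1$. Differentiating both sides and substituting $f' = f(f-1)$ into the derivative of the prefactor gives
\begin{equation*}
f^{(n+1)} = (f-1)\left[\,f \sum_{k=0}^{n-1}\binom{n}{k} f^{(k)} + \sum_{k=0}^{n-1}\binom{n}{k} f^{(k+1)}\,\right],
\end{equation*}
so $(f-1)$ is already factored out; the whole task is to rewrite the bracketed expression as $\sum_{k=0}^{n}\binom{n+1}{k} f^{(k)}$.

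The key step is to deal with the awkward factor of $f$ in the first sum. Since by the inductive hypothesis $(f-1)\sum_{k=0}^{n-1}\binom{n}{k} f^{(k)} = f^{(n)}$, writing $f = (f-1) + 1$ gives
\begin{equation*}
f\sum_{k=0}^{n-1}\binom{n}{k} f^{(k)} = f^{(n)} + \sum_{k=0}^{n-1}\binom{n}{k} f^{(k)}.
\end{equation*}
Substituting this back, shifting the index in the second sum by $k \mapsto k-1$, and applying Pascal's rule $\binom{n}{k} + \binom{n}{k-1} = \binom{n+1}{k}$ on the overlap $1 \leq k \leq n-1$ collapses everything into $\sum_{k=0}^{n}\binom{n+1}{k} f^{(k)}$, once the leftover $f^{(n)}$ is combined with the $k=n$ contribution $n f^{(n)}$ to yield $(n+1)f^{(n)} = \binom{n+1}{n} f^{(n)}$ and the isolated $f = \binom{n+1}{0} f^{(0)}$ is identified as the $k=0$ term.

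The only potentially delicate point is the index bookkeeping when combining the two sums with shifted ranges $[0,n-1]$ and $[1,n]$; once one isolates the boundary terms $k=0$ and $k=n$, Pascal's rule handles the bulk cleanly. Nothing beyond careful index shifting is needed, so this is the main (and only) obstacle. The formulae (\ref{c1}) and (\ref{c2}) then follow by applying this identity to the particular $u = 1/(1+e^z)$ in entry (\ref{l}) of the list of examples and translating $f^{(k)}$ into the Eulerian polynomials via Theorem 1.
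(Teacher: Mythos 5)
Your proof is correct and follows essentially the same route as the paper's: induction on $n$, with the same key move of invoking the inductive hypothesis a second time (via $f=(f-1)+1$, equivalently the paper's rearrangement $f\sum_{k=0}^{n-1}\binom{n}{k}f^{(k)}=\sum_{k=0}^{n}\binom{n}{k}f^{(k)}$) to absorb the factor of $f$ coming from $f'=f(f-1)$, followed by an index shift and Pascal's rule. The boundary-term bookkeeping you describe ($\binom{n+1}{0}f^{(0)}$ at $k=0$ and $(n+1)f^{(n)}=\binom{n+1}{n}f^{(n)}$ at $k=n$) checks out.
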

\begin{proof}
The proof is by induction with respect to $n$. For $n=1$ formula (\ref{fn}) is obviously true and let us suppose that it holds for a positive integer $n$. We have
{\setlength\arraycolsep{2pt}
\begin{eqnarray}
&&\hspace{-7mm}f^{(n+1)}(z)=f'(z)\sum\limits_{k=0}^{n-1}{n \choose k}f^{(k)}(z)+(f(z)-1)\sum\limits_{k=0}^{n-1}{n \choose k}f^{(k+1)}(z)\nonumber\\
&&=(f(z)\!-\!1)f(z)\sum\limits_{k=0}^{n-1}{n \choose k}f^{(k)}(z)+(f(z)\!-\!1)\sum\limits_{k=1}^{n}{n \choose k\!-\!1}f^{(k)}(z).\label{p1}
\end{eqnarray}}
By rearranging (\ref{fn}) to the form
	\[f(z)\sum\limits_{k=0}^{n-1}{n \choose k}f^{(k)}(z)=\sum\limits_{k=0}^{n}{n \choose k}f^{(k)}(z)
\]
and using it to the first sum of (\ref{p1}) we get
{\setlength\arraycolsep{2pt}
\begin{eqnarray*}	
f^{(n+1)}(z)&=&(f(z)-1)\sum\limits_{k=0}^{n}{n \choose k}f^{(k)}(z)+(f(z)-1)\sum\limits_{k=1}^{n}{n \choose k\!-\!1}f^{(k)}(z)\\
&=&(f(z)-1)\left(\sum\limits_{k=1}^{n}\left({n \choose k} + {n \choose k\!-\!1}\right)f^{(k)}(z)+f(z)\right)\\
&=&(f(z)-1)\sum\limits_{k=0}^{n}{n\!+\!1 \choose k}f^{(k)}(z),
\end{eqnarray*}}
and then formula (\ref{fn}) is proved.
\end{proof}
By using Theorem 1 we see that formula (\ref{fn}) is equivalent to
	\[\sum\limits_{j=0}^{n-1} {n \bangle j}u^{j+1}(u-1)^{n-j}=
	(u-1)\left(\sum\limits_{k=1}^{n-1} {n\choose k}\sum\limits_{j=0}^{k-1} {k \bangle j}u^{j+1}(u-1)^{k-j}+u\right).
\]
By substituting here $u/(u-1)=x$, $u=x/(x-1)$, $u-1=1/(x-1)$ we get
\[\sum\limits_{j=0}^{n-1} {n \bangle j}\frac{x^{j+1}}{(x-1)^{n+1}}=
	\frac{1}{x-1}\left(\sum\limits_{k=1}^{n-1} {n\choose k}\sum\limits_{j=0}^{k-1} {k \bangle j}\frac{x^{j+1}}{(x-1)^{k+1}}+\frac{x}{x-1}\right),
\]
hence we obtain the formula
	\[\frac{1}{ (x-1)^{n+1}}E_{n}(x)=\frac{1}{x-1}\left(\sum\limits_{k=1}^{n-1} {n\choose k}\frac{1}{(x-1)^{k+1}}E_{k}(x)+\frac{1}{x-1}E_{1}(x)\right),
\]
and the formulae (\ref{c1}) and (\ref{c2}) are proved.
\section{Generating functions for the MacMahon polynomials}
It is useful to get the generating function for the polynomials $Q_{n}(u)$ as
\[G(u,t)=Q_0(u;a,b)+\frac{r}{2}Q_{1}(u;a,b)t+\frac{r^{2}}{2^{2}}Q_{2}(u;a,b)\frac{t^{2}}{2!}+\frac{r^{3}}{2^{3}}Q_{3}(u;a,b)\frac{t^{3}}{3!}+\cdots
\]
Let the functions $u=u(z)$, $v=v(z)$ fulfill respectively equations (\ref{R}) and (\ref{R2}). Then using (\ref{g}) we have
{\setlength\arraycolsep{2pt}
\begin{eqnarray*}	
 v(z)G(u(z),t) &=& v(z)+v(z)\frac{r}{2}Q_{1}(u(z);a,b)t+v(z)\frac{r^{2}}{2^{2}}Q_{2}(v(z);a,b)\frac{t^{2}}{2!}+\cdots\\
& =& v(z)+v'(z)t+v''(z)\frac{t^{2}}{2!}+\cdots = g(z+t).
\end{eqnarray*}}
For example, for data from the point 5 on page 2 we get 
	\[\frac{e^{z/2}}{1+e^{z}}G(u(z), t)=\frac{e^{z/2}e^{t/2}}{1+e^{z}e^{t}},
\]
hence
	\[G(u(z), t)=\frac{(1+e^{z})e^{t/2}}{1+e^{z}e^{t}}.
\]
Therefore in this case
\begin{equation}\label{G}
	G(u, t)=\frac{(1+\frac{1-u}{u})e^{t/2}}{1+\frac{1-u}{u}e^{t}}=\frac{e^{t/2}}{u+(1-u)e^{t}}.
\end{equation}
The generating function (\ref{G}) can be used for calculation of the exponential generating function for the MacMahon polynomials (\ref{MMp})
\begin{equation}\label{G2}
	M_{0}(x) +\frac{1}{2}M_{1}(x)y + \frac{1}{2^{2}}M_{2}(x)\frac{y^2}{2!}+  \frac{1}{2^{3}}M_{3}(x)\frac{y^3}{3!}+\cdots .
\end{equation}
In order to do it let us observe, that we obtain the generating function (\ref{G2}) by substituting into 
$G(u,t)$ given by (\ref{G}), $u/(u-1)=x$ and $(u-1)t=y$ (that is $1/(u-1)=x-1$, $x/(x-1)=u$, $t=(x-1)y$). We calculate
	\[\frac{e^{t/2}}{u+(1\!-\!u)e^{t}}=\frac{e^{(x-1)y/2}}{\frac{x}{x-1}+(1\!-\!\frac{x}{x-1})e^{(x-1)y}}=
	\frac{(x\!-\!1)e^{(x-1)y/2}}{x\!-\!e^{(x-1)y/2}}=\frac{(1\!-\!x)e^{(1-x)y/2}}{1\!-\!xe^{(1-x)y/2}}.
\]
Thus
	\[M_{0}(x) +\frac{1}{2}M_{1}(x)y + \frac{1}{2^{2}}M_{2}(x)\frac{y^2}{2!}+  \frac{1}{2^{3}}M_{3}(x)\frac{y^3}{3!}+\cdots 
=\frac{(1\!-\!x)e^{(1-x)y/2}}{1\!-\!xe^{(1-x)y/2}}
\]
and
	\[M_{0}(x) +M_{1}(x)y + M_{2}(x)\frac{y^2}{2!}+  M_{3}(x)\frac{y^3}{3!}+\cdots 
=\frac{(1\!-\!x)e^{(1-x)y}}{1\!-\!xe^{(1-x)y}}.
\]
\section{Integral representations}
In paper \cite{Rz2} we have proved that for $n=1,2,3,\ldots$ 
\begin{equation}\label{i1}
	\int_{a}^{b}P_{n}(u;a,b)du=-(b-a)^{n+1}B_{n},
\end{equation}
where $B_{n}$ is the $n$th Bernoulli number and since $ P_{n}(u;a,b)$ is a polynomial i.e., an entire function, the integral can be understood as integral over any curve (piecewise smooth), joining points $a$ and $b$. Formula (\ref{i1}) is important because it gives immediately the following Grosset--Veselov formula (see Grosset--Veselov \cite{GV}) 
\begin{equation}\label{i2}
	B_{2m}=\frac{(-1)^{m-1}}{2^{2m+1}}\int_{-\infty}^{+\infty} \left(\frac{d^{m-1}}{dx^{m-1}}
	\frac{1}{\cosh^{2}x}\right)^{2}dx,
\end{equation}
which connects one--soliton solution of the KdV equation with Bernoulli numbers. Fairlie and Veselov \cite{FV} proved, by using the conservation laws, that KdV equation is directly related to the Faulhaber polynomials and the Bernoulli polynomials (see  \cite{Kn}). Grosset and Veselov \cite{GV} demonstrated the formula (\ref{i2}) in two ways, using the cited results and then adapting an idea due to Logan described in the book \cite{GKP}. Boyadzhiev \cite{Bo3} gave an alternative proof of (\ref{i2}), based on the Fourier transform. He noted that this proof was independently suggested by Professor A.
Staruszkiewicz (see also 'Note added in Proofs' at the end of \cite{GV}).

In order to prove (\ref{i2}) let us observe that one of the solutions of the equation (\ref{R}), for $a=-1, b=1, r=-1$, is $u=\tanh z$.
Since the image of the real line, under this function, is the interval $(-1,1)$ we have by (\ref{i1})
\begin{equation}\label{i3}
	\int_{-\infty}^{\infty}\frac{(\tanh z)^{(n-1)}}{\cosh^{2}z}\;dz=(-1)^{n-1}\int_{-1}^{1}P_{n}(u;-1,1)du=(-1)^{n}2^{n+1}B_{n}.
\end{equation}
Taking in (\ref{i3}) $n=2m$ and using, $(m-1)$-times, the formula of integration by parts for the leftmost integral, we get the Grosset--Veselov formula (\ref{i2}).\\
There arises a natural question about similar calculations for other polynomials e.g., for $Q_{n}(u,a,b)$. As we have proved in Sec. 4 (formula (\ref{G})) the generating function for the polynomials in the case of $a=0, b=1, r=1$ is
\[G(u,t)=Q_0(u;0,1))+\frac{1}{2}Q_{1}(u;0,1)t+\frac{1}{2^{2}}Q_{2}(u;0,1)\frac{t^{2}}{2!}+\cdots = \frac{e^{t/2}}{u+(1-u)e^{t}},
\]
and therefore
\begin{equation}\label{i4}
\int_{0}^{1}G(u,t)du	=e^{t/2}\int_{0}^{1}\frac{1}{u+(1-u)e^{t}}du=\frac{te^{t/2}}{e^{t}-1}.
\end{equation}
However since the generating function for the Bernoulli polynomials \\ $B_{0}(w), B_{1}(w),\ldots$ is
\begin{equation}\label{gB}
	B_{0}(w)+B_{1}(w)t+B_{2}(w)\frac{t^{2}}{2!}+B_{3}(w)\frac{t^{3}}{3!}+\cdots =\frac{te^{wt}}{e^{t}-1},
\end{equation}
then from (\ref{i4}) we get the following theorem.
\begin{theorem}
For $n=0,1,2,\ldots$ 
\begin{equation}\label{i5}
	\int_{0}^{1}Q_{n}(u;0,1)du=2^{n}B_{n}\left(\frac{1}{2}\right).
\end{equation}
\end{theorem}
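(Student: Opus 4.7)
The plan is to compare two power series expansions of the same analytic function of $t$, namely $\int_{0}^{1} G(u,t)\,du$, where $G$ is the generating function derived in Section 5 for the case $a=0$, $b=1$, $r=1$. The identity (\ref{i4}) already gives one closed form, so most of the work is identification of coefficients.

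First, I would take (\ref{i4}) for granted, but note how it is obtained: the inner integral $\int_{0}^{1}\frac{du}{u+(1-u)e^{t}}$ is elementary because the integrand is of the form $1/(\alpha+\beta u)$ with $\alpha=e^{t}$ and $\beta=1-e^{t}$, giving $\frac{1}{1-e^{t}}\log\frac{1}{e^{t}}=\frac{t}{e^{t}-1}$, and multiplying by $e^{t/2}$ yields $\frac{t e^{t/2}}{e^{t}-1}$. This is exactly the right-hand side of the Bernoulli generating function (\ref{gB}) evaluated at $w=1/2$:
\[
\frac{t e^{t/2}}{e^{t}-1}= \sum_{n=0}^{\infty} B_{n}\!\left(\tfrac{1}{2}\right)\frac{t^{n}}{n!}.
\]

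Next, I would expand the left-hand side of (\ref{i4}) by interchanging summation and integration. Since $G(u,t)$ is, by definition, the power series $\sum_{n\ge 0}\frac{1}{2^{n}}Q_{n}(u;0,1)\frac{t^{n}}{n!}$ (a polynomial in $u$ times $t^{n}/n!$ at each order, converging for $|t|$ small enough and uniformly in $u\in[0,1]$), term-by-term integration is legitimate and gives
\[
\int_{0}^{1} G(u,t)\,du = \sum_{n=0}^{\infty} \frac{1}{2^{n}}\left(\int_{0}^{1} Q_{n}(u;0,1)\,du\right)\frac{t^{n}}{n!}.
\]
Equating this with the Bernoulli series above and matching coefficients of $t^{n}/n!$ produces the claimed identity $\int_{0}^{1} Q_{n}(u;0,1)\,du = 2^{n} B_{n}(1/2)$.

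There is essentially no obstacle; the one point that deserves a line of justification is the $2^{n}$ factor, which is forced by the normalization in the definition of $G$ (the Taylor coefficient there is $r^{n}/2^{n}$ times $Q_{n}$, and with $r=1$ only $2^{n}$ survives). Everything else is a coefficient comparison between two explicit generating functions, both already established earlier in the paper.
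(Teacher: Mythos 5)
Your proposal is correct and follows the same route as the paper: integrate the generating function $G(u,t)=e^{t/2}/(u+(1-u)e^{t})$ over $[0,1]$ to obtain $te^{t/2}/(e^{t}-1)$, recognize this as the Bernoulli generating function (\ref{gB}) at $w=1/2$, and compare coefficients of $t^{n}/n!$. The elementary evaluation of the inner integral and the bookkeeping of the $2^{n}$ factor are exactly as in the paper's argument.
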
 
Since the polynomial $Q_{n}(u;a,b)$ is homogenous then by a suitable linear change of the variable in the integral (\ref{i5}) we get immediately
\begin{equation}\label{i6}
		\int_{a}^{b}Q_{n}(u;a,b)du=2^{n}B_{n}\left(\frac{1}{2}\right)(b-a)^{n+1}.
\end{equation}
Let us recall that $B_{n}=B_{n}(0)$. Then in view of (\ref{i1}) and (\ref{i6}) the next natural question arises, which concerns the existence of a family of polynomials 'connecting' polynomials $P_{n}(u;a,b)$ and $Q_{n}(u;a,b)$ in the sense that the corresponding integrals would give the values of the Bernoulli polynomial at intermediate points between $0$ and $\frac{1}{2}$. \\
Denote by $S_{n}(u; a,b,d),\;n=0,1,2,\ldots $ the polynomial (of order $n$) standing on the right hand side of the equation (\ref{gd}) i.e.,
\begin{equation}\label{gd2}
	S_{n}(u; a,b,d)=\sum\limits_{k=0}^{n}{n \choose k} (2d)^{k}Q_{n-k}(u; a, b).
\end{equation}
We will prove that $\{S_{n}(u; a,b,d)\}$ form the requested family of polynomials. A closed form formula for the following exponential generating function: 
\begin{equation}\label{gH}
	H(u,t)= S_{0}(u; a,b,d) + \frac{r}{2}S_{1}(u;a,b,d)t+\frac{r^{2}}{2^{2}}S_{2}(u;a,b,d)\frac{t^{2}}{2!}+\cdots,
\end{equation}
can be found similarly as in the previous cases. We assume that functions $u=u(z)$ and $v=v(z)$ are solutions of the equations (\ref{R}) and (\ref{R2bis}) respectively. Using the Taylor formula for the function $v=v(z)$ we have
{\setlength\arraycolsep{2pt}
\begin{eqnarray}
v(z)H(u(z),t)&=& v(z)S_{0}(u(z); a,b,d) + v(z)\frac{r}{2}S_{1}(u(z);a,b,d)t \nonumber \\
&&+ v(z)\frac{r^{2}}{2^{2}}S_{2}(u(z);a,b,d)\frac{t^{2}}{2!}\cdots \nonumber  \\ 
&=& v(z)+v'(z)t+v''(z)\frac{t^{2}}{2!}+\cdots =v(z+t).\label{g3}
\end{eqnarray}}
For example taking here $a=0, b=1, r=1$ and $\displaystyle u(z)=\frac{1}{1+e^{z}}$ the second equation (\ref{R2bis}) has the form
	\[v'(z)=v(z)\left(\frac{1}{1+e^{z}}-\frac{1}{2}+d\right),
\]
with a solution
\begin{equation}\label{s2}
	v(z)=\frac{e^{(1/2+d)z}}{1+e^{z}}.
\end{equation}
Using (\ref{g3}) and (\ref{s2}) we get
	\[H(u(z),t)=\frac{v(z+t)}{v(z)}=\frac{(1+e^{z})e^{(1/2+d)t}}{1+e^{z}e^{t}}
\]
and putting here $\displaystyle u(z)=\frac{1}{1+e^{z}}$, $\displaystyle e^{z}=\frac{1-u}{u}$ we arrive at
\begin{equation}\label{g4}
	H(u,t)=\frac{(1+\frac{1-u}{u})e^{(1/2+d)t}}{1+\frac{1-u}{u}e^{t}}=\frac{e^{(1/2+d)t}}{u+(1-u)e^{t}}.
\end{equation}
Then (\ref{g4}) yields
	\[\int_{0}^{1}H(u,t)du=\int_{0}^{1}\frac{e^{(1/2+d)t}}{u+(1-u)e^{t}}du=\frac{te^{(1/2+d)t}}{e^t-1},
\]
and therefore using (\ref{gB}) we arrive at the formula ($n=0,1,2,\ldots$)
 \begin{equation}\label{i7}
	\int_{0}^{1}S_{n}(u;0,1,d)du=2^{n}B_{n}\left(\frac{1}{2}+d\right).
\end{equation}
In order to generalize (\ref{i7}) to the polynomial $S_{n}(u;a,b,d)$ we use formula (\ref{gd2}). Therefore by (\ref{i6}) we have
{\setlength\arraycolsep{2pt}
\begin{eqnarray*}
&&\int_{a}^{b}S_{n}(u;a,b,d)=\sum\limits_{k=0}^{n}{n \choose k} (2d)^{k}\int_{a}^{b}Q_{n-k}(u; a, b)du \\
&&= \sum\limits_{k=0}^{n}{n \choose k} (2d)^{k} 2^{n-k}B_{n-k}\left(\frac{1}{2}\right)(b-a)^{n-k+1} \\
&&=2^{n}(b-a)^{n+1} \sum\limits_{k=0}^{n}{n \choose k} \left(\frac{d}{b-a}\right)^{k}B_{n-k}\left(\frac{1}{2}\right)\\
&&=2^{n}(b-a)^{n+1} B_{n}\left(\frac{1}{2}+\frac{d}{b-a}\right).
\end{eqnarray*}}
At the very end of the above calculation we have used the addition formula for the Bernoulli polynomials (see Temme \cite{T}, p.4)
	\[B_{n}(x+y)=\sum\limits_{k=0}^{n}{n \choose k}B_{k}(x)y^{n-k}.
\]
Thus we have proved the following
\begin{theorem}
For $n=1,2,\ldots $
	\[\int_{a}^{b}S_{n}(u;a,b,d)du=2^{n}(b-a)^{n+1} B_{n}\left(\frac{1}{2}+\frac{d}{b-a}\right).
\]
\end{theorem}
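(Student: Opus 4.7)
My plan is to reduce the theorem to two ingredients that are already available: the expansion (\ref{gd2}) of $S_{n}(u;a,b,d)$ as an explicit linear combination of the polynomials $Q_{m}(u;a,b)$, and the integral identity (\ref{i6}) for each $Q_{m}(u;a,b)$. A single appeal to the classical addition formula for the Bernoulli polynomials should then collapse what remains.

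In detail, I would first use (\ref{gd2}) and linearity of the integral to write
$$\int_{a}^{b} S_{n}(u;a,b,d)\,du \;=\; \sum_{k=0}^{n}\binom{n}{k}(2d)^{k}\int_{a}^{b} Q_{n-k}(u;a,b)\,du.$$
Substituting (\ref{i6}) into each inner integral, factoring out the common $2^{n}(b-a)^{n+1}$, and tidying the powers of $2$ and of $b-a$, the right-hand side becomes
$$2^{n}(b-a)^{n+1}\sum_{k=0}^{n}\binom{n}{k}\left(\frac{d}{b-a}\right)^{k}B_{n-k}\!\left(\frac{1}{2}\right).$$
Reindexing $k \mapsto n-k$ and using $\binom{n}{k}=\binom{n}{n-k}$, the remaining sum is exactly the addition formula $B_{n}(x+y)=\sum_{j=0}^{n}\binom{n}{j}B_{j}(x)y^{n-j}$ specialised at $x=\frac{1}{2}$ and $y=\frac{d}{b-a}$, which equals $B_{n}\!\bigl(\frac{1}{2}+\frac{d}{b-a}\bigr)$. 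Reassembling the factors gives the claim.

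No step in this plan is a genuine obstacle; the argument is essentially bookkeeping once (\ref{i6}) and the Bernoulli addition formula are in hand. The only mild subtlety is confirming (\ref{i6}) in the stated generality, since Theorem~4 established the corresponding integral only for $a=0$, $b=1$. This is handled by the homogeneity of $Q_{m}(u;a,b)$ noted in the text: the substitution $u=a+(b-a)t$ transports the integral over $[a,b]$ to $(b-a)^{m+1}$ times the integral of $Q_{m}(t;0,1)$ over $[0,1]$, and Theorem~4 then applies. With that point settled, the three-step reduction above completes the proof.
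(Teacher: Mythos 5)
Your proposal is correct and follows exactly the paper's own proof: expand $S_{n}$ via (\ref{gd2}), integrate term by term using (\ref{i6}) (justified, as in the text, by the homogeneity of $Q_{m}$), and finish with the Bernoulli addition formula at $x=\tfrac{1}{2}$, $y=\tfrac{d}{b-a}$. The bookkeeping with the powers of $2$ and of $b-a$ checks out, so there is nothing to add.
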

Comparing the generating functions (with parameters $a=0,\: b=1,\: r=1,\: d=-1/2$): $F(u,t)$ (given by formula (\ref{gr2})) with  $H(u,t)$  (formula (\ref{g4})) of the polynomials $\{P_{n}(u;0,1)\}$ and  $\{S_{n}(u;0,1,-1/2)\}$ respectively we get also
	\[\frac{P_{n+1}(u;0,1)}{u}=\frac{1}{2^{n}}S_{n}(u;0,1,-1/2).
\]
In particular, it follows that the coefficients of the polynomial \\
$\displaystyle \frac{1}{2^{n}}S_{n}(u;0,1,-1/2)$ are all integer.

\end{document}